\numberwithin{equation}{section}
\newtheorem{definition}{Definition}[section]
\newtheorem{theorem}[definition]{Theorem}
\newtheorem{lemma}[definition]{Lemma}
\def\pr{\begin{proof}}
\def\sq{\end{proof}}
\newcommand{\cO}{\mathcal{O}}
\newcommand{\cP}{\mathcal{P}}
\newcommand{\Cpct}{{\mathcal{K}}}
\newcommand{\sw}[1]{{{}_{(#1)}}}
\newcommand{\qsph}{{ C(S^5_H)}}
\newcommand{\B}[1]{{\mathbb #1}}
\newcommand{\id}{\operatorname{id}}
\def\sw#1{{\sb{(#1)}}}
\newcommand{\tplz}{{\mathcal{T}}}
\newcommand{\T}{{\mathcal{T}}}
\newcommand{\alert}{}
\renewcommand{\phi}{\varphi}
\renewcommand{\epsilon}{\varepsilon}
\renewcommand{\[}{\begin{equation}}
\renewcommand{\]}{\end{equation}}
\begin{document}
\baselineskip=16pt
\author{Piotr~M.~Hajac}
\address{Instytut Matematyczny, Polska Akademia Nauk, ul.~\'Sniadeckich 8, Warszawa, 00-656 Poland} 
\email{pmh@impan.pl}
\author{Jan Rudnik}
\address{Instytut Matematyczny, Polska Akademia Nauk, ul.~\'Sniadeckich 8, Warszawa, 00-656 Poland}
\email{yarood@gmail.com}
\title[Bundles over the quantum complex projective plane]{\vspace*{-25mm}Noncommutative 
bundles over the multi-pullback quantum complex projective plane}
\maketitle
\begin{abstract}\vspace*{-7.5mm}
We equip  the multi-pullback $C^*$-algebra $C(S^5_H)$ of a noncommutative-deformation  of the 5-sphere
with a free $U(1)$-action, and show that its fixed-point subalgebra is isomorphic with the 
$C^*$-algebra of the multi-pullback quantum complex projective plane. Our main result 
is the stable non-triviality of the dual tautological line bundle associated to the action.
We prove it by combining Chern-Galois theory with the Milnor connecting homomorphism in $K$-theory.
 Using the Mayer-Vietoris six-term exact sequences and the functoriality
of the K\"unneth formula,  we also compute the $K$-groups
of~$C(S^5_H)$.
\end{abstract}\vspace*{-7.5mm}
\setcounter{tocdepth}{3}
{\footnotesize \tableofcontents}
\section*{Introduction}
This paper is a part of a bigger project  devoted to the $K$-theory of
multi-pullback noncommutative deformations of free actions on spheres defining complex and real 
projective spaces. The lowest-dimensional complex case is worked out in \cite{bhms05,hms06b} with the help
of index theory.  Herein we focus on the triple-pullback quantum complex projective 
plane~$\B CP^2_\T$~\cite{hkz12} and its quantum 5-sphere~$S^5_H$. 
Upgrading from pullback $C^*$-algebras of \cite{bhms05,hms06b} to triple-pullback $C^*$-algebras requires
a significant change of methods. In particular, we have to take care of the cocycle condition, as explained in
 Section~\ref{cocycle}, to compute the $K$-groups of  $C(S^5_H)$  and
$C(\B CP^2_\T)$ in Section~\ref{kgroups} and~\cite{r-j12} respectively.

The main theorem of the paper is:

\noindent\textbf{Theorem $\mathbf{2.4}$}
The section module $C(S^5_H)_u$ of the dual tautological line bundle 
over $\B CP^2_\T$ is \emph{not stably free} as a left 
$ C(\B CP^2_\T)$-module.

\noindent
The result is derived by comparing two idempotents: one coming from Chern-Galois theory applied
to the $U(1)$-action on $C(S^5_H)$,
 and the other one obtained by applying a formula \eqref{o2e} for the Milnor connecting homomorphism in a
$K$-theory exact sequence. It is the same strategy that was used to determine non-trivial generators of the $K_0$-group
of Heegaard quantum lens spaces~\cite{hrz13}.

To explain a wider background  and make the paper self-contained, we begin with a review
of basic building blocks that are subsequently assembled into new results. Concerning notation, we use the
unadorned tensor product $\otimes$ to denote the minimal (spatial) tensor product of $C^*$-algebras 
and
$\otimes_\mathrm{alg}$ to denote the algebraic tensor product.

\section{Preliminaries}

\subsection{From the Toeplitz algebra to quantum projective spaces}

\subsubsection{Toeplitz algebra}\label{toeplitz}

There are different ways to introduce the Toeplitz algebra~$\T$.
Herein we  define it as the universal $C^*$-algebra generated by 
one  isometry $s$, i.e.\ an element satisfying the relation 
\mbox{$s^*s=1$}. 
(Throughout the paper $s$ will always mean the generating isometry 
of~$\T$.) Likewise, $u$ will always mean the unitary element 
generating the $C^*$-algebra $ C(S^1)$ 
of all continuous complex-valued functions on the unit circle~$S^1:=\{x\in\mathbb{C}\;|\; |x|=1\}$. 
By mapping $s$ to $u$, 
we obtain the well-known short exact sequence of $C^*$-algebras~\cite{c-la1,c-la2}:
\begin{equation}
0\longrightarrow \Cpct\longrightarrow\tplz
\stackrel{\sigma}{\longrightarrow} C(S^1)
\longrightarrow 0.
\end{equation}

We consider the Toeplitz algebra as the $C^*$-algebra of continuous
functions on a \emph{quantum disc}. To justify this point of view,
we take the family of  universal $C^*$-algebras generated by
$x$ satisfying $x^*x-qxx^*=1-q$, $\|x\|=1$, $q\in [0,1]$ \cite{kl93}.
For $q\neq 1$, the norm condition is implied by the relation, and can
be omitted. For $q=1$, it yields precisely the $C^*$-algebra 
$ C(D)$ of all
continuous complex-valued functions on the unit disc $D:=\{x\in\mathbb{C}\;|\; |x|\leq 1\}$. Finally, for
$q=0$, we get the Toeplitz algebra. Thus we obtain the 
$\T$ as a $q$-deformation of~$ C(D)$.

Both the Toeplitz algebra $\T$ and $ C(S^1)$ are examples
 of  graph $C^*$-algebras~\cite{flr}. 
Graph $C^*$-algebras are generated by partial isometries. 
They come naturally equipped with a $U(1)$-action given by rephasing
these partial isometries by unitary complex numbers. This $U(1)$-action
is called the gauge action. A key feature of the symbol map $\sigma$
is that it is equivariant with respect to the gauge actions.

\subsubsection{Projective spaces}

Projective spaces of dimension $n\in\B N$ over a topological 
field $\B K$ are defined as follows:
\begin{gather}
\B KP^n:=\{(x_0,\dots,x_n)\in\B K^{n+1}\setminus(0,\dots,0)\}/\sim,\\
(x_0,\dots,x_n)\sim (y_0,\dots,y_n) \iff \exists\lambda\in\B K\setminus\{0\}:(x_0,\dots,x_n)=\lambda (y_0,\dots,y_n).\nonumber
\end{gather}
We denote the equivalence class of $(x_0,\dots,x_n)$ by 
$[x_0:\ldots:x_n]$.
There is the canonical affine open covering of the thus
 defined projective spaces:
\begin{equation}
\forall\;i\in\{0,\dots,n\}:\;
U_i:=\{[x_0:\ldots:x_n]\in\B KP^n\;|\ x_i\neq0\}\stackrel{\widetilde{\psi_{i}}}{\longrightarrow}\B K^n .
\end{equation}
The above homeomorphisms are given by
\[
\widetilde{\psi_{i}}([x_0:\ldots :x_n]):=\left(\frac{x_0}{x_i},\dots,\frac{x_{i-1}}{x_i},\frac{x_{i+1}}{x_i},\dots,\frac{x_n}{x_i}\right).
\]

Let us now focus our attention on
$\B K=\B C$. To express the covering subsets in  
$C^*$-algebraic terms, we choose closed rather than open coverings.
To this end, we define  the following closed refinement of the affine
covering:
\begin{equation}
\forall\;i\in\{0,\dots,n\}:\;
V_i:=\big\{[x_0:\ldots:x_n]\in\B CP^n\;|\ 
\alert{|x_i|=\max\{|x_0|,\ldots,|x_n|\}}\big\}\cong D^{n}.
\end{equation}
Here  
the homeomorphisms are given by appropriate restrictions of
$\widetilde{\psi_{i}}$'s denoted by~${\psi_{i}}$.
We  use the covering $\{V_i\}_{i}$
to present $\mathbb{C} P^n$ as a multi-pushout.
More precisely, we
pick indices $0\leq i<j\leq n$, 
 denote by $\psi_{ij}$ the restriction
of $\psi_{i}$  to $V_i\cap V_j$,
and take the following commutative diagram:
\begin{equation}\label{kpdiag}
\xymatrix{
 & & \mathbb{C}P^n & & \\
D^{n} \ar@{-->}[urr] &
V_i \ar[l]_-{\psi_i}\ar@{^{(}->}[ur] & &
V_j\ar[r]^-{\psi_j}\ar@{_{(}->}[ul] &
D^{n}\phantom{W}\ar@{-->}[ull]\\
D^{j-1}\times S^1\times D^{n-j} \ar@{^{(}->}[u] & &
V_i\cap V_j \ar@{_{(}->}[ul] \ar@{^{(}->}[ur] \ar[ll]_-{\psi_{ij}} \ar[rr]^-{\psi_{ji}} & &
D^{i}\times S^1\times D^{n-i-1}. \ar@<2ex>@{_{(}->}[u]
}
\end{equation}

\subsubsection{Quantum complex projective spaces}

Now we combine the foregoing presentation of projective spaces with
the idea that the Toeplitz algebra is the $C^*$-algebra of functions
on a quantum unit disc  to construct a new type of quantum projective spaces~\cite{hkz12}.  To define
them, first we excise from  diagram 
\eqref{kpdiag} its middle square, and dualise it to the multi-pullback
diagram of unital commutative 
$C^*$-algebras of functions on appropriate compact
Hausdorff spaces: 
\begin{equation}\label{dual}
\xymatrix{
 &  C(\mathbb{C}P^n)\ar@{-->}[dl]\ar@{-->}[dr] &  \\
 C(D)^{\otimes n}\ar@{->>}^{\pi^i_j}[d] & & 
 C(D)^{\otimes n}\ar@{->>}[d]\ar@{->>}[dll]_{\pi^j_i}\\
 C(D) ^{\otimes j-1}\!\otimes\!  C(S^1)\!\otimes\!
  C(D)^{\otimes n-j} & & 
 C(D)^{\otimes i}\!\otimes\!  C(S^1)\!\otimes\!
  C(D)^{\otimes n-i-1}\!\!\!\ar[ll]_{(\psi_{ji}\circ\psi^{-1}_{ij})^*}. 
}
\end{equation}
This yields a multi-pullback presentation of~$C(\B CP^n)$.  Then we leave $ C(S^1)$ unchanged and
 replace $ C(D)$ by~$\T$. 
It turns out that the formulae for 
$(\psi_{ji}\circ\psi^{-1}_{ij})^*$'s, $\pi^i_j$'s and $\pi^j_i$'s
 continue to make sense after these replacements, so that
quantum projective spaces can be defined as Pedersen's 
\emph{multi-pullback $C^*$-algebras} (see~\cite{p-gk99,cm00})
\[\label{pedersen}
B^\pi:=\left\{\left.(b_i)_i\in\prod_{i\in J}B_i\;\right|\;\pi^i_j(b_i)=\pi^j_i(b_j),\;\forall\, i,j\in J,\, i\neq j
\right\},
\]
where $\{\pi^i_j:B_i\rightarrow B_{ij}=B_{ji}\}_{i,j\in J,\,i\neq j}$ is 
the family of  $C^*$-homomorphisms defined through  commutative diagram 
(\ref{dual}) with $C(D)$ replaced by~$\T$.

\subsection{Mayer-Vietoris six-term exact sequence}

For a one-surjective  pullback diagram of $C^*$-algebras 
\[
\xymatrix{&A\ar[dl]\ar[dr]&\\B_0\ar@{>>}[dr]_{\pi_0}&&B_1,\ar[dl]^{\pi_1}\\&B_{01}&}
\]
there exists  the Mayer-Vietoris six-term exact sequence
 (e.g., see \cite[Theorem~21.2.2]{b-b98}
\cite[Section~1.3]{bhms05}, \cite{s-c84}):
\[\label{mv}
\xymatrix{K_0(A)\ar[r]&K_0(B_0\oplus B_1)\ar[r]&K_0(B_{01})\ar[d]^{\partial_{01}}\\
K_1(B_{01})\ar[u]^{\partial_{10}}&K_1(B_0\oplus B_1)\ar[l]&K_1(A).\ar[l]}
\]
In our applications of this exact sequence, we will need
explicit formulae for connecting homomorphisms
$\partial_{10}$ and~$\partial_{01}$.

\subsubsection{Odd-to-even connecting homomorphism}

Following the celebrated Milnor's construction of an odd-to-even
connecting homomorphism in algebraic $K$-theory~\cite{m-j71},
one can derive an explicit formula for this homomorphism 
\cite{r-a,dhhmw12}, and adapt it to unital $C^*$-algebras
(see \cite[Section~0.4]{hrz13} for an argument of Nigel Higson).
\begin{theorem}
Let $U\in \text{GL}_n(B_{01})$,  $(\id\otimes\pi_0)(c)=U^{-1}$ and 
$(\id\otimes\pi_0)(d)=U$.
Denote by $I_n$ the identity matrix of size~$n$, and put
\[\label{milnorid}
p_U:=\left(
\begin{array}{cc}
(c(2 - dc)d,1)&(c(2 - dc)(1 - dc),0)\\
((1 - dc)d,0)&((1 - dc)^2,0)
\end{array}\right)\in M_{2n}(A).
\]
Then $p_U$ is an idempotent and  the formula 
\[\label{o2e}
\partial_{10}([U]):=[p_U]-[I_n]
\]
defines an odd-to-even connecting homomorphism
 $\partial_{10}:K_1(B_{01})\to K_0(A)$ in the Mayer-Vietoris
six-term exact sequence~(\ref{mv}).
\end{theorem}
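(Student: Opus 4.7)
The plan is to attack the two assertions of the theorem separately: first, that $p_U$ is an idempotent in $M_{2n}(A)$, and second, that the resulting class $[p_U]-[I_n]\in K_0(A)$ realises the Mayer-Vietoris boundary $\partial_{10}([U])$. I would begin by verifying that $p_U$ lies in $M_{2n}(A)$, checking the pullback compatibility pair by pair. Since $(\id\otimes\pi_0)(dc)=UU^{-1}=I_n$ and $(\id\otimes\pi_0)(1-dc)=0$, the three entries whose $B_1$-component is $0$ are each killed by $\id\otimes\pi_0$, while the top-left entry satisfies $(\id\otimes\pi_0)(c(2-dc)d)=U^{-1}(2I_n-I_n)U=I_n$, matching the identity $I_n$ carried in its $B_1$-slot.

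The idempotent identity $p_U^2=p_U$ is then a direct $2\times 2$ block computation. On the $B_1$-component it reduces to $\mathrm{diag}(I_n,0)^2=\mathrm{diag}(I_n,0)$. On the $B_0$-component, every cross-term of $p_U^2$ factors through the expression $dc(2-dc)+(1-dc)^2$, which expands to $I_n$; this single identity collapses all four blocks uniformly back to the entries of $p_U$. The conceptual meaning of the formula emerges next from the Whitehead decomposition
\[
\mathrm{diag}(U^{-1},U)=\begin{pmatrix}1&U^{-1}\\0&1\end{pmatrix}\begin{pmatrix}1&0\\-U&1\end{pmatrix}\begin{pmatrix}1&U^{-1}\\0&1\end{pmatrix}\begin{pmatrix}0&-1\\1&0\end{pmatrix}.
\]
Substituting $c$ for $U^{-1}$ and $d$ for $U$ in this product yields an invertible lift $W\in\mathrm{GL}_{2n}(B_0)$ of $\mathrm{diag}(U^{-1},U)$, and a short calculation identifies the $B_0$-component of $p_U$ with $W\,\mathrm{diag}(I_n,0)\,W^{-1}$; thus $p_U$ is precisely the pullback of the pair $(W\mathrm{diag}(I_n,0)W^{-1},\mathrm{diag}(I_n,0))$ into $M_{2n}(A)$.

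The last step is to invoke the $C^*$-algebraic adaptation of Milnor's construction (Higson's argument cited in~\cite[Section~0.4]{hrz13}): conjugating the standard rank-$n$ projection by any invertible lift of a Whitehead-split diagonal produces an idempotent whose class, after subtracting $[I_n]$, realises $\partial_{10}([U])$. Independence of the choice of lifts $c$ and $d$ then follows because any two lifts can be joined by a straight-line path inside the fibres of $\id\otimes\pi_0$, inducing a homotopy of idempotents in $M_{2n}(A)$. The hardest part will be this final identification: the algebraic verifications are routine, but matching the explicit Milnor idempotent with the abstractly defined Mayer-Vietoris boundary requires tracking how the clutching data for a finitely generated projective $A$-module assemble from the lifted invertible, which is exactly the content of Higson's argument.
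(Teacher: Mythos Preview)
The paper does not actually prove this theorem: it is placed in the Preliminaries section and is stated as a known result, with the derivation attributed to the references \cite{m-j71,r-a,dhhmw12} and the $C^*$-adaptation to Higson's argument in \cite[Section~0.4]{hrz13}. There is no proof in the paper to compare against.

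Your proposal is a correct and well-organised reconstruction of the standard argument behind those citations. The verification that $p_U\in M_{2n}(A)$ is right (the key point being $(\id\otimes\pi_0)(1-dc)=0$), the identity $dc(2-dc)+(1-dc)^2=1$ does collapse the block computation for $p_U^2=p_U$, and your Whitehead factorisation of $\mathrm{diag}(U^{-1},U)$ is the one Milnor uses; the lift $W$ obtained by substituting $c,d$ is invertible because it is a product of elementary matrices, and the conjugation $W\,\mathrm{diag}(I_n,0)\,W^{-1}$ indeed reproduces the $B_0$-component of $p_U$. Your final appeal to Higson's argument is exactly what the paper itself invokes, so you have not bypassed anything the paper supplies. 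The homotopy argument for independence of the lifts is also correct, since the formula \eqref{milnorid} yields an idempotent for \emph{any} choice of $c,d$ lifting $U^{-1},U$, and convex interpolation of lifts gives a continuous path of idempotents in $M_{2n}(A)$.

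In short: there is nothing to compare, but what you wrote is a faithful expansion of precisely the sources the paper cites.
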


\subsubsection{Even-to-odd connecting homomorphism}\label{BHch}

Combining \cite[Theorem~1.18]{bm}
with \cite[Section~9.3.2]{b-b98}, we obtain:
\begin{theorem}
Let $p\in M_n(B_{01})$ be a projection, $(\id\otimes\pi_0)(Q_p)=p$, 
$Q_p^*=Q_p$, and $I_n$ be the identity matrix of size~$n$. 
Then  the formula  
\[\label{01}
\partial_{10}([p]):=[(e^{2\pi i Q_p},I_n)]
\]
defines an even-to-odd connecting homomorphism in
 the Mayer-Vietoris six-term exact 
se\-quen\-ce~\eqref{mv}.\vadjust{\goodbreak}
\end{theorem}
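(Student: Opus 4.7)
The plan is to derive the formula by combining the two cited ingredients: the explicit exponential formula for the $K_0$-to-$K_1$ connecting map of a short exact sequence of $C^*$-algebras \cite[Theorem~1.18]{bm}, and Blackadar's identification \cite[Section~9.3.2]{b-b98} of the Mayer-Vietoris boundary with a composite built from a classical connecting map and the ideal isomorphism supplied by the one-surjective pullback.

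First I would check that $(e^{2\pi i Q_p},I_n)$ really defines a unitary element of $M_n(A)$. Since $Q_p=Q_p^*$, the exponential $e^{2\pi i Q_p}$ is unitary in $M_n(B_0)$; and since $p$ is a projection, its spectrum is contained in $\{0,1\}$, so functional calculus yields $e^{2\pi i p}=I_n$. Consequently $(\id\otimes\pi_0)(e^{2\pi i Q_p})=I_n=(\id\otimes\pi_1)(I_n)$, and the pullback property places the pair in $M_n(A)$. Independence of the resulting class in $K_1(A)$ from the choice of self-adjoint lift follows by linearly interpolating between any two such lifts: the interpolants still project to $p$, so exponentiating produces a continuous path of unitaries in $M_n(A)$.

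The crux is to identify this construction with $\partial_{01}$ in the Mayer-Vietoris sequence. One-surjectivity furnishes an isomorphism of $C^*$-ideals $\ker p_1\cong\ker\pi_0$ via $(b_0,0)\mapsto b_0$, where $p_1\colon A\to B_1$ is the canonical projection. Following \cite[Section~9.3.2]{b-b98}, the Mayer-Vietoris boundary decomposes as the composition $K_0(B_{01})\xrightarrow{\delta} K_1(\ker\pi_0)\xrightarrow{\cong} K_1(\ker p_1)\xrightarrow{\iota_*} K_1(A)$, where $\delta$ is the six-term boundary of the short exact sequence $0\to\ker\pi_0\to B_0\to B_{01}\to 0$ and $\iota$ is the ideal inclusion. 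By \cite[Theorem~1.18]{bm} we have $\delta([p])=[e^{2\pi i Q_p}]$ in the unitization of $\ker\pi_0$. Transporting this class along the ideal isomorphism gives $(e^{2\pi i Q_p},1)$ in the unitization of $\ker p_1$, and $\iota_*$ then produces the class of $(e^{2\pi i Q_p},I_n)\in M_n(A)$, as required by \eqref{01}.

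The main obstacle is the bookkeeping around unitizations: one must verify that the scalar identity matrices supplied by the two cited statements line up with the second coordinate $I_n$ in the formula, and that Blackadar's identification truly conjugates the exponential boundary map into $\partial_{01}$. Once this matching is spelled out, the rest amounts to functional calculus and naturality.
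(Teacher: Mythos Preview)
Your proposal is correct and follows exactly the route the paper indicates: the paper does not give an explicit proof but simply states that the theorem is obtained by ``combining \cite[Theorem~1.18]{bm} with \cite[Section~9.3.2]{b-b98}'', and you have faithfully unpacked what that combination means---the exponential formula for the index map of the short exact sequence $0\to\ker\pi_0\to B_0\to B_{01}\to 0$, transported into $K_1(A)$ via the ideal isomorphism $\ker\pi_0\cong\ker p_1$ supplied by the one-surjective pullback. Your treatment of the unitization bookkeeping (showing that the scalar part becomes the $I_n$ in the second coordinate) is the only point the paper leaves entirely to the reader, and you handle it correctly.
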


\subsubsection{Cocycle condition for multi-pullback $C^*$-algebras}\label{cocycle}

We  construct algebras of functions on quantum spaces
as multi-pullbacks of $C^*$-algebras. To make sure that this construction 
dually  corresponds to
the presentation of a quantum space as a ``union of closed subspaces'' (no self gluings of closed subspaces or their partial multi-pushouts;
see \cite{hz12} for an in-depth discussion of these issues), we assume
 the  cocycle condition. It allows us to apply the Mayer-Vietoris six-term exact sequence to multi-pullback $C^*$-algebras
by guaranteeing surjectivity of appropriate *-homomorphisms.

First we need some auxilliary definitions.
Let $(\pi^i_j:A_i\rightarrow A_{ij})_{i,j\in J,i\neq j}$ be a finite family of
surjective $C^*$-algebra homomorphisms. For all distinct $i,j,k\in J$, we define
$A^i_{jk}:=A_i/(\ker\pi^i_j+\ker\pi^i_k)$ and denote by \mbox{$[\cdot]^i_{jk}:A_i\rightarrow
A^i_{jk}$} the canonical surjections. Next, we introduce the family of maps
\begin{equation}
\pi^{ij}_k:A^i_{jk}\longrightarrow A_{ij}/\pi^i_j(\ker\pi^i_k),\qquad
[b_i]^i_{jk}\longmapsto\pi^i_j(b_i)+\pi^i_j(\ker\pi^i_k),
\end{equation}
for all distinct $i,j,k\in J$.
Note that they are isomorphisms when all $\pi^i_j$'s are surjective $C^*$-algebra homomorphisms, as assumed herein.

We say {\cite[in Proposition~9]{cm00}} that a finite family $(\pi^i_j:A_i\rightarrow A_{ij})_{i,j\in J,i\neq j}$ of $C^*$-algebra
surjections satisfies the {\em cocycle condition}
 if and only if, for all distinct $i,j,k\in J$,
\begin{enumerate}
\item $\pi^i_j(\ker\pi^i_k)=\pi^j_i(\ker\pi^j_k)$,
\item the isomorphisms
  $\phi^{ij}_k:=(\pi^{ij}_k)^{-1}\circ\pi^{ji}_k:A^j_{ik}\rightarrow A^i_{jk}$
  satisfy $\phi^{ik}_j=\phi^{ij}_k\circ\phi^{jk}_i$.
\end{enumerate}

One proves (\cite[Theorem~1]{hz12}) that a finite family $(\pi^i_j:A_i\rightarrow A_{ij})_{i,j\in J,i\neq j}$ of $C^*$-algebra
surjections satisfies the cocycle condition if and only if,
for all $K\subsetneq J$, $k\in J\setminus K$, and 
$(b_i)_{i\in K}\in \bigoplus_{i\in K}A_i$ such that $\pi^i_j(b_i)=\pi^j_i(b_j)$
for all distinct $i,j\in K$, there exists $b_k\in A_k$ such that
also $\pi^i_k(b_i)=\pi^k_i(b_k)$ for all $i\in K$. One can easily see that 
dually this corresponds to the statement ``a quantum space is a
pushout of parts, and all partial pushouts are embedded in this quantum space''.
This is  what we usually have in mind when constructing a space from parts.

\subsection{Actions of compact Hausdorff groups on unital $C^*$-algebras}


To use the language of strong connections \cite{h-pm96} and facilitate some 
computations, we need to transform actions of compact Hausdorff groups
on unital $C^*$-algebras 
to  coactions of their $C^*$-algebras on unital $C^*$-algebras. 
More precisely, 
let $A$ be a unital 
$C^*$-algebra and $G$ a compact Hausdorff group with a group homomorphism 
$\alpha\colon G\ni g\mapsto \alpha_g\in \mathrm{Aut}(A)$. Then
\[\label{aut}
\delta_\alpha\colon A\longrightarrow  C(G,A)\cong 
A\otimes  C(G),\quad \delta_\alpha(a)(g):=\alpha_g(a).
\]
We will use the thus related action and coaction interchangeably.

Furthermore, for any compact Hausdorff group $G$, we can define
the Hopf-algebraic structure on $ C(G)$ due to its
commutativity: 
\begin{itemize}
\item
the comultiplication~$\Delta\colon C(G)\to C(G)\otimes C(G)$, 
\item
the counit~$\varepsilon\colon C(G)\to\mathbb{C}$, 
\item
and the antipode $S\colon C(G)\to C(G)$
\end{itemize}
are respectively the pullbacks of the group
mulitplication,  the embedding of the neutral
element into $G$, and the inverting map
 $G\ni g\mapsto g^{-1}\in G$.
We can also use the Heynemann-Sweedler notation (with the summation
sign suppressed) for coactions  and
comultiplications:
\begin{itemize}
\item
$\delta(a)=:a_{(0)}\otimes a_{(1)},\quad\delta(a)(g)=(a_{(0)}\otimes a_{(1)})(g)=a_{(0)}a_{(1)}(g)$,
\item
$\Delta(h)=:h_{(1)}\otimes h_{(2)},\quad\Delta(h)(g_1,g_2)=(h_{(1)}\otimes h_{(2)})(g_1,g_2)=h_{(1)}(g_1)h_{(2)}(g_2)=h(g_1g_2)$.
\end{itemize}

In particular, for $G=U(1)$, 
 the antipode is determined by $S(u)=u^{-1}$, the counit by $\varepsilon(u)=1$, 
and finally the comultiplication by $\Delta(u)=u\otimes u$.  The coaction of $ C(U(1))$ on $\tplz$ coming from the aforementioned
(Section~\ref{toeplitz})
gauge action of $U(1)$ on $\tplz$ becomes
\begin{equation}
\label{gaugecoact}
\delta:\tplz\longrightarrow\tplz\otimes C(U(1)),
\quad \delta(s):=s\otimes u.
\end{equation}

\subsubsection{Freeness}

Following \cite{e-da00}, we say that an action of a compact Hausdorff group $G$ on a unital $C^*$-algebra $A$ is \emph{free}
if and only if the induced  coaction satisfies the following norm-density condition:
\[
\{(x\otimes 1)\delta(y)\;|\;x,y\in A\}^{\mathrm{cls}}=
A\otimes C(G).
\] 
Here ``cls" stands for ``closed linear span".

Next, let us denote by
 $\cO (G)$ the dense Hopf $*$-subalgebra  spanned by the matrix coefficients of finite-dimensional representations.  We define the
\emph{Peter-Weyl subalgebra} of $A$  as
\[
\cP_G(A):=\left\{\,a\in A\,\Big| \,\delta(a)\in A\underset{\mathrm{alg}}{\otimes}\cO (G)\,\right\}.
\]
One shows that it is an $\cO (G)$-comodule algebra which is a dense
$*$-subalgebra of~$A$. (See \cite{s-pm11} and references therein.) Moreover, the $C^*$-algebraic freeness condition on
a $G$-$C^*$-algebra $A$ 
is equivalent to the algebraic \emph{principality} condition on the $\cO (G)$-comodule algebra~$\cP_G(A)$~\cite{bdh}.
This allows us to use crucial algebraic tools without leaving
the ground of $C^*$-algebras.

\subsubsection{Strong connections and principal comodule algebras}

One can  prove (see \cite{bh} and references therein) that
a comodule algebra is principal if and only if it admits a strong 
connection. Therefore we will treat the existence of a strong connection 
as a condition defining the principality of a comodule algebra and avoid the original
definition of a principal comodule algebra. The latter is important when going beyond coactions that are
algebra homomorphisms --- then the existence of a strong connection is implied by
principality but we do not have the reverse implication~\cite{bh04}.

Let $G$ be a compact Hausdorff group acting on a unital $C^*$-algebra~$A$.
A \emph{strong connection $\ell$} on $A$ is a unital linear map 
$\ell :\mathcal{O}(G) \rightarrow \cP_G(A) \otimes_\mathrm{alg}\cP_G(A)$ satisfying:
\begin{enumerate}
\item 
$(\mathrm{id}\otimes \delta) \circ 
\ell = (\ell \otimes \mathrm{id}) \circ \Delta$,
$\big(((S\otimes\id)\circ\mathrm{flip}\circ\delta) \otimes \mathrm{id}\big) \circ 
\ell = (\mathrm{id} \otimes \ell) \circ
\Delta$;
\item 
$m \circ \ell=\varepsilon$, where 
$m\colon \cP_G(A)\otimes_\mathrm{alg}\cP_G(A)\to \cP_G(A)$ is the multiplication map.
\end{enumerate}
Here we abuse notation by using the same symbol for a restriction-corestriction of a map as for the map itself.

\subsubsection{Associated projective modules}

Let $\varrho\colon G\to GL(V)$ be a representation of
a compact Hausdorff group $G$ on a complex vector space~$V$, and  $\alpha\colon G\to\mathrm{Aut}(A)$ 
be an action on a unital $C^*$-algebra~$A$.
Then the \emph{associated module} $\cP_G(A)\Box^\varrho V$ is, by definition,
\[
\left\{x\in \cP_G(A)\underset{\mathrm{alg}}{\otimes} V\;\big|
\;\forall\; g\in G:\,(\alpha_g\otimes \id)(x)=\big(\id\otimes \varrho(g^{-1})\big)(x)\right\}.
\]
It is a left module over the fixed-point subalgebra 
$
A^\alpha:=\{a\in A\;|\;
\forall\,g\in G:\,\alpha_g(a)=a\}=:A^{U(1)}
$.

 If $V$ is finite dimensional and $\alpha$ is free, then $\cP_G(A)\Box^\varrho V$ is finitely generated projective~\cite{hm99}.
We think of it as the section module of an \emph{associated noncommutative vector bundle}.
 Furthermore, if $\dim V=1$ and $\gamma\colon G\to GL(\mathbb{C})$ is a representation, then we obtain:
\[
\cP_G(A)\overset{\gamma}{\Box}\mathbb{C}=\{a\in A\;|\;\delta(a)=a\otimes S(\gamma)\}=:A_{\gamma^{-1}}.
\]
\noindent
Modules $A_\gamma$ are called \emph{spectral subspaces}. We think of them as the section modules of 
	associated noncommutative \emph{line} bundles.

Now it is quite easy to apply Chern-Galois theory \cite[Theorem~3.1]{bh04}, and compute
 an idempotent $E_\gamma$ representing
 the associated module $A_\gamma$ using
 a strong connection~$\ell$:
\[\label{ie}
A_\gamma\cong (A^\alpha)^nE^\gamma,\quad E^\gamma_{ij}:=\gamma^R_i\gamma^L_j,
\quad \ell(\gamma)=:\sum_{k=1}^n \gamma^L_k\otimes\gamma^R_k
\in A_{\gamma^{-1}}\underset{\mathrm{alg}}{\otimes}A_\gamma,
\]
where $\{\gamma^L_k\}_k$ is a linearly independent set.

\subsubsection{Gauging coactions}\label{gauging}

Consider $A\otimes C(G)$ as a $C^*$-algebra
with the diagonal coaction 
\[
p\otimes h\longmapsto p\sw{0}\otimes h\sw{1}\otimes p\sw{1}h\sw{2}\,,
\]
 and denote by
$(A\otimes C(G))_R$ the same $C^*$-algebra but now equipped with the coaction on the
rightmost factor 
\[
p\otimes h\longmapsto p\otimes h\sw{1}\otimes h\sw{2}\,. 
\]
Then the following
map is a $G$-equivariant (i.e., intertwining the coactions) \emph{gauge} isomorphism of $C^*$-algebras:
\begin{equation}\label{kappa}
\widehat{\kappa}:(A\otimes C(G))\longrightarrow (A\otimes C(G))_R,\quad a\otimes h\longmapsto a\sw{0}\otimes a\sw{1}h.
\end{equation}
\noindent
Its inverse is explicitly given  by
\begin{equation}\label{kappa-1}
\widehat{\kappa}^{-1}:(A\otimes H)_R\longrightarrow (A\otimes H),\quad a\otimes h\longmapsto a\sw{0}\otimes S(a\sw{1})h.
\end{equation}

\section{Dual tautological line bundle}

\subsection{Quantum complex projective plane}
We consider the case $n=2$ of the multi-Toeplitz deformations
\cite[Section 2]{hkmz11}
of the complex projective spaces. The $C^*$-algebra of
our quantum projective plane is given as the  triple-pullback 
of the following diagram:
\begin{equation}\label{thefamily}
\xymatrix{
\mathcal{T}\otimes \mathcal{T}\ar[dr]^{\sigma_1}\ar@/_3pc/[ddrr]_{\sigma_2}&&\mathcal{T}\otimes 
\mathcal{T}\ar[dl]_{\Psi_{01}\circ\sigma_1}\ar[dr]^{\sigma_2}&&\mathcal{T}\otimes 
\mathcal{T}\ar[dl]_{\Psi_{12}\circ\sigma_2}\ar@/^3pc/[ddll]^{\Psi_{02}\circ\sigma_1}\\
&  C(S^1) \otimes \mathcal{T}&&\mathcal{T}\otimes  C(S^1) &\\
&&\mathcal{T}\otimes  C(S^1) &&}.
\end{equation}
Here 
$\sigma_1:=\sigma\otimes \id$, $\sigma_2:=\id\otimes\sigma$, and
\begin{align}\label{maps}
 C(S^1) \otimes \mathcal{T}\ni v\otimes t&\stackrel{\Psi_{01}}{\longrightarrow} S(t_{(1)}v)\otimes 
t\sw0 \in  C(S^1) \otimes \mathcal{T},\\
 C(S^1) \otimes \mathcal{T}\ni v\otimes t&\stackrel{\Psi_{02}}{\longrightarrow} t\sw0\otimes 
S(t_{(1)}v) \in \mathcal{T}\otimes  C(S^1) ,\nonumber\\
\mathcal{T}\otimes  C(S^1)  \ni t\otimes v&\stackrel{\Psi_{12}}{\longrightarrow}t\sw0\otimes 
S(t_{(1)}v)\in   \mathcal{T}\otimes  C(S^1) ,\nonumber
\end{align}
where $\mathcal{T}\ni t\mapsto t\sw0\otimes t_{(1)}\in 
\mathcal{T}\otimes  C(S^1) $ is the coaction of 
\eqref{gaugecoact}.

\subsection{Quantum complex projective plane $\B CP^2_\tplz$ as quotient  space $S^5_H/U(1)$}

Consider the following triple-pullback diagram in which every 
homomorphism is given by the symbol map on the appropriate factor
and identity otherwise:
$$
\xymatrix@R=40pt@C=0pt{
 C(S^1)\!\otimes\! \tplz\!\otimes\!\tplz\
\ar[dr]\ar@/_4pc/[ddrr]&&\tplz\!\otimes\!   C(S^1)\!\otimes\! \tplz
\ar[dl]\ar[dr]&&\tplz\!\otimes\! \tplz\!\otimes\!   C(S^1).\ar[dl]\ar@/^4pc/[ddll]\\
& C(S^1)\otimes  C(S^1)\otimes \tplz&&\tplz\otimes  C(S^1)\otimes  C(S^1) 
&\\
&&  C(S^1)\otimes\tplz\otimes  C(S^1)&&}
$$
\vspace*{-12.5mm}\[\label{s5}
\phantom{.}
\]
\smallskip

\begin{definition}\label{hs5}
The multi-pullback $C^*$-algebra of the  family of $C^*$-epimorphisms
in \eqref{s5} is called the $C^*$-algebra of the
\emph{Heegaard odd quantum sphere} $S^5_H$ and denoted~$ C(S^5_H)$.
\end{definition}

Using the coaction \eqref{gaugecoact} on $\T$ and the comultiplication on
$ C(S^1)=C(U(1))$, we define the diagonal coaction on each $C^*$-algebra
of the above diagram as in Section~\ref{gauging}. The diagram is
evidently equivariant with respect to this coaction because
the symbol map is equivariant. Therefore $ C(S^5_H)$ is a 
$U(1)$-$C^*$-algebra. We call this $U(1)$-action on 
$ C(S^5_H)$ \emph{diagonal}.

In order to compute the fixed-point subalgebra for the above diagonal $U(1)$-action,
we need to gauge it to an action on tensor products
that acts on the rightmost $ C(S^1)$-factor alone. Our goal is to show that the fixed-point subalgebra
is isomorphic with $ C(\B CP^2_\tplz)$.
To this end, we double
the three targets of all homomorphisms in \eqref{s5} to three pairs
of sibling targets, so that 
\[
\xymatrix{ C(S^1)\otimes\T\otimes\T\ar[rd]&&\T\otimes C(S^1)\otimes\T\ar[ld]\\
& C(S^1)\otimes C(S^1)\otimes\T&}
\]
becomes
\[
\xymatrix{ C(S^1)\otimes\T\otimes\T\ar[d]&\T\otimes C(S^1)\otimes\T\ar[d]\\
 C(S^1)\otimes C(S^1)\otimes\T& C(S^1)\otimes C(S^1)\otimes\T,\ar[l]_\id}
\]
and other subdiagrams are transformed in the same fashion.
Then we permute the factors in 
the tensor products in the top row  to make
 $ C(S^1)$ always the rightmost factor, and permute the target tensor products accordingly:
\[\label{permut}
\xymatrix{\T\otimes\T\otimes C(S^1)
\ar[d]_{\sigma\otimes\id\otimes\id}&\T\otimes\T\otimes C(S^1)
\ar[d]^{\sigma\otimes\id\otimes\id}\\
 C(S^1)\otimes\T\otimes C(S^1)& C(S^1)\otimes\T\otimes C(S^1).
\ar[l]_{T_{13}}
}\]
Here the horizontal arrow is just the flip of the outer factors.
Again, we apply analogous procedures to the other two subdiagrams. 
Due to the commutativity
of $ C(S^1)$, the thus obtained triple-pullback diagram
is equivariant for the diagonal coaction, and the $C^*$-algebra it defines is 
equivariantly isomorphic with the multi-pullback $C^*$-algebra defined by diagram~\eqref{s5}.

Now we are ready  to gauge the diagonal action as explained in Section~\ref{gauging}. 
Conjugating $T_{13}\circ(\sigma\otimes\id\otimes\id)$  by the gauge isomorphisms~\eqref{kappa}--\eqref{kappa-1}, 
using the commutativity and cocommutativity of $ C(S^1)=C(U(1))$,
along the lines of 
\cite[Section~5.2]{hkmz11}, we get:
\begin{align}
&\big(\widetilde{g}\circ T_{13}\circ(\sigma\otimes\id\otimes\id)\circ g^{-1}\big)
(r\otimes t\otimes w)
\nonumber\\ &=
\big(\widetilde{g}\circ T_{13}\circ(\sigma\otimes\id\otimes\id)\big)
(r\sw0\otimes t\sw0\otimes S(r\sw1 t\sw1)w)
\nonumber\\ &=
(\widetilde{g}\circ T_{13})(\sigma(r)\sw1\otimes t\sw0\otimes S(\sigma(r)\sw2 t\sw1)w)
\nonumber\\ &=
\widetilde{g}(S(\sigma(r)\sw2 t\sw1)w\otimes t\sw0\otimes\sigma(r)\sw1 )
\nonumber\\ &=
S(\sigma(r)\sw3 t\sw3)w\sw1\otimes t\sw0\otimes S(\sigma(r)\sw2 t\sw2)w\sw2 t\sw1 \sigma(r)\sw1
\nonumber\\ &=
S(\sigma(r) t\sw1)w\sw1\otimes t\sw0\otimes w\sw2.
\end{align}
Here $g$ and $\widetilde{g}$ are the gauge isomorphisms on $(\T\otimes\T)\otimes C(U(1))$
and  $( C(S^1)\otimes\T)\otimes C(U(1))$ respectively.

Much in the same way, we treat the remaining two subdiagrams of diagram~\eqref{s5}. 
Summarizing, for $0\leq i<j\leq 2$, the permuted and then gauged subdiagrams become:
\[\label{news5}
\xymatrix{
\T^{\otimes 2}\otimes C(S^1)\ar[d]_{\sigma_j}&\T^{\otimes 2}
\otimes C(S^1)\ar[d]_{\sigma_{i+1}}\\
\T^{\otimes j-1}\otimes C(S^1)\otimes\T^{\otimes 2-j}\otimes C(S^1)&
\T^{\otimes i}\otimes C(S^1)\otimes\T^{\otimes 1-i}\otimes C(S^1),\ar[l]_{\ \ \Psi^S_{ij}}
}
\]
where 
\begin{align}\label{psis}
\Psi^S_{01}(v\otimes t\otimes w)&:=S(vt_{(1)})w_{(1)}\otimes t_{(0)}\otimes w_{(2)},\\
\Psi^S_{02}(v\otimes t\otimes w)&:=t_{(0)}\otimes S(vt_{(1)})w_{(1)}\otimes  w_{(2)},\nonumber\\
\Psi^S_{12}(t\otimes v\otimes w)&:=t_{(0)}\otimes S(t_{(1)}v)w_{(1)}\otimes  w_{(2)}.\nonumber
\end{align}

The triple-pullback $C^*$-algebra of the family \eqref{news5} is denoted by 
$C(S^5_H)_R$. It is a $U(1)$-$C^*$-algebra
that is equivariantly isomorphic with $ C(S^5_H)$:
\begin{gather}\label{s5iso}
 C(S^5_H)\ni 
\big(v^0\otimes t^0\otimes r^0\;,\;t^1\otimes v^1\otimes r^1\;,\; t^2\otimes r^2\otimes v^2\big)
\longmapsto\\ 
\big(t^0\sw0\otimes r^0\sw0\otimes t^0\sw1 r^0\sw1 v^0 \;,\;{t^1}\sw0\otimes r^1\sw0\otimes t^1\sw1 
r^1\sw1 v^1\;,\; t^2\sw0\otimes r^2\sw0\otimes t^2\sw1 r^2\sw1 v^2\big)\in 
 C(S^5_H)_R .\nonumber
\end{gather}
This isomorphism yields an isomorhism of fixed-point subalgebras 
$ C(S^5_H)^{U(1)}\cong C(S^5_H)_R^{U(1)}$.
Since the $U(1)$-action in the triple-pullback diagram defining $ C(S^5_H)_R $
acts only on the rightmost factor, we conclude that $ C(S^5_H)_R^{U(1)}$
is the triple-pullback $C^*$-algebra obtained by
removing all rightmost factors in \eqref{news5} and taking $w=1$
in~\eqref{psis}.
Finally, since the isomorphisms in \eqref{psis} thus
become the isomorphisms
in~\eqref{maps}, so that \eqref{news5} becomes the defining 
triple-pullback diagram \eqref{thefamily}
 of $ C(\B CP^2_\tplz)$, we infer that 
$ C(S^5_H)^{U(1)}\cong C(\B CP^2_\tplz)$.

\subsection{Strong connection for the diagonal $U(1)$-action on
$C(S^5_H)$}

\begin{theorem}
The diagonal $U(1)$-action on $ C(S^5_H)$ is free.
\end{theorem}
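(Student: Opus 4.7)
By the equivalence between $C^*$-algebraic freeness and existence of a strong connection on the Peter-Weyl comodule algebra recalled in the Preliminaries, the task reduces to exhibiting a strong connection $\ell\colon\cO(U(1))\to\cP_{U(1)}( C(S^5_H))\otimes_\mathrm{alg}\cP_{U(1)}( C(S^5_H))$. The plan is to first apply the equivariant isomorphism~\eqref{s5iso} to pass from $ C(S^5_H)$ to $ C(S^5_H)_R$, whose defining diagram~\eqref{news5} has the $U(1)$-coaction concentrated on the rightmost $ C(S^1)$-factor of each component. On a single component $\T\otimes\T\otimes C(S^1)$, the rightmost-factor coaction is manifestly free, with the trivial strong connection $u^n\mapsto(1\otimes 1\otimes u^n)\otimes(1\otimes 1\otimes u^{-n})$. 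The substance of the argument is the patching of these component-wise strong connections across the multi-pullback.

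Since $\cO(U(1))$ is generated by $u$ and $u^{-1}$, and a strong connection is determined by its values on generators through the equivariance conditions together with the normalization $m\circ\ell=\varepsilon$, it suffices to produce elements $y^{\pm}_k\in C(S^5_H)_R$ of weight $\pm 1$ together with $\tilde y^{\pm}_k$ of weight $\mp 1$ satisfying $\sum_k y^{\pm}_k\tilde y^{\pm}_k=1$, and then to set $\ell(u^{\pm 1}):=\sum_k y^{\pm}_k\otimes\tilde y^{\pm}_k$. The natural ansatz uses noncommutative sphere coordinates $\tilde z_0,\tilde z_1,\tilde z_2\in C(S^5_H)_R$ of weight $+1$, assembled as compatible triples whose non-rightmost slots carry weight-zero phase corrections dictated by the multi-pullback structure, and satisfying a sphere-like normalization $\sum_i\tilde z_i^*\tilde z_i=1$ reflecting the Toeplitz relation $s^*s=1$ in each component.

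The main obstacle is the cocycle verification: the gluing isomorphisms $\Psi^S_{ij}$ from~\eqref{psis} are \emph{not} the identity on the rightmost $ C(S^1)$-factor, because they split $u^n$ via the coproduct $\Delta(u^n)=u^n\otimes u^n$ and redistribute the resulting phases into the middle slots. Consequently, the naive triple $(1\otimes 1\otimes u^n,\,1\otimes 1\otimes u^n,\,1\otimes 1\otimes u^n)$ does not lie in $ C(S^5_H)_R$. The resolution is to absorb this redistribution into weight-zero corrections in the non-rightmost factors of each component. The cocycle condition of Section~\ref{cocycle}, which is built into the family defining $ C(S^5_H)_R$, together with the equivariance of every $\sigma_j$ and $\Psi^S_{ij}$, guarantees that such corrections can be chosen consistently on all triple overlaps; the explicit verification reduces to a finite calculation with the formulas~\eqref{psis} and the identities $u^n u^{-n}=1$, $S(u^n)=u^{-n}$. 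Once compatible weight-$\pm 1$ triples are produced, extending via the Hopf-algebraic equivariance conditions of the strong-connection definition yields the required $\ell$.
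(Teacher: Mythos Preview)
Your proposal is an outline, not a proof: the decisive objects---the ``noncommutative sphere coordinates $\tilde z_i$'' and the ``weight-zero corrections''---are never exhibited, and the sentence ``the explicit verification reduces to a finite calculation'' is an assertion that a calculation exists rather than the calculation itself. Nothing in Section~\ref{cocycle} manufactures elements for you; the cocycle condition says that partial multi-pushouts embed in the total space, but it does not produce weight-$\pm 1$ triples compatible with~\eqref{psis}. Until such triples are actually written down and the three pairwise compatibilities checked, no strong connection has been constructed and freeness is unproved. (There is also a minor slip: the bicovariance condition~(1) forces the \emph{second} tensorand of $\ell(u)$ to have weight $+1$, so your $y^+_k\otimes\tilde y^+_k$ with weights $(+1,-1)$ is backwards.)

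The detour through $ C(S^5_H)_R$ makes the task harder rather than easier. In the original diagram~\eqref{s5} the gluing maps are plain symbol maps on a single tensor factor, so membership of a triple in the multi-pullback follows immediately from $\sigma(s)=u$; after gauging, the maps $\Psi^S_{ij}$ in~\eqref{psis} mix factors via the antipode and the comultiplication, which is precisely the obstacle you name but do not resolve. The paper stays in $ C(S^5_H)$ and simply writes down three commuting isometries $a,b,c$ of diagonal weight $+1$ (the triple whose $i$-th component carries $u$ in slot $i$ and whose other components carry $s$ in slot $i$); compatibility is then a one-line check. The key identity is $(1-aa^*)(1-bb^*)(1-cc^*)=0$, which holds because in every component one of $a,b,c$ is unitary. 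From it one reads off $\ell(u)=b^*\otimes b$ and a seven-term inclusion--exclusion formula for $\ell(u^*)$ that multiplies to $1-(1-aa^*)(1-bb^*)(1-cc^*)=1$. Your ansatz with only $\sum_i\tilde z_i^*\tilde z_i=1$ would, even if completed, still have to confront the asymmetry $\sum_i\tilde z_i\tilde z_i^*\neq 1$ when defining $\ell(u^{-1})$; the vanishing-product identity is exactly what handles this.
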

\begin{proof}
We prove the claim by constructing a strong connection on the Peter-Weyl
comodule algebra $\mathcal{P}_{U(1)}( C(S^5_H))$ for the 
diagonal coaction 
$\delta\colon C(S^5_H)
\to C(S^5_H)\otimes C(U(1))$.
Let $u$ be the generating unitary of $ C(S^1)$ and $s$ be the 
generating isometry of $\tplz$.
Consider the following isometries in $ C(S^5_H)$:
\begin{align}
&a:=(u\otimes 1\otimes 1 , s\otimes 1\otimes 1, s\otimes 1\otimes 1),\\
&b:=(1\otimes s\otimes 1 , 1\otimes u\otimes 1, 1\otimes s\otimes 1),\nonumber\\
&c:=(1\otimes 1\otimes s , 1\otimes 1\otimes s, 1\otimes 1\otimes u).\nonumber
\end{align}
They all commute and satisfy the equation:
\[
(1-aa^*)(1-bb^*)(1-cc^*)=0.
\]

Now one can easily check that  a strong connection 
\[
\ell\colon\mathcal{O}(U(1))\longrightarrow
\mathcal{P}_{U(1)}( C(S^5_H))\underset{\mathrm{alg}}{\otimes}\mathcal{P}_{U(1)}( C(S^5_H))
\subseteq C(S^5_H)\otimes C(S^5_H)
\]
can be defined by the formulae:
\begin{align}
\ell(1)=1\otimes 1,&\qquad
\ell(u)=b^*\otimes b,\label{bb*}\\
\ell(u^*)=a\otimes a^* +b\otimes b^* +c\otimes c^* -a\otimes a^*&bb^*-a\otimes a^*cc^*-b\otimes b^*cc^*+a\otimes a^*bb^*cc^*.
\end{align}
Indeed, exactly as in
\cite[(4.6)]{hms06a}, we can show inductively that the formula
\[
\ell(x^n):=\ell(x)^{\langle 1 \rangle}\ell(x^{n-1})\ell(x)^{\langle 2 \rangle},\quad 
\ell(x)=:\ell(x)^{\langle 1 \rangle}\otimes\ell(x)^{\langle 2 \rangle}\text{ (summation suppressed),}
\]
has the desired properties for $x$ being any of the grouplikes
$u$ and $u^*$. 
\end{proof}

\subsection{Stable non-freeness}

\begin{definition}
Let $u$ be the generating unitary of $ C(U(1))$ and 
$ C(S^5_H)\stackrel{\delta}{\to} C(S^5_H)\otimes  C(U(1))$  the diagonal coaction. We call the associated module
$$
 C(S^5_H)_u:=\{x\in C(S^5_H)\;|\;\delta(x)=x\otimes u\}
$$
the section module of the \emph{dual tautological line bundle} over $\B CP^2_\T$.
\end{definition}

It follows from the existence of a strong connection 
on the Peter-Weyl comodule algebra $\mathcal P_{U(1)}( C(S^5_H))$
that $ C(S^5_H)_u$ is a finitely generated projective module over $ C(S^5_H)^{U(1)}\cong C(\B CP^2_\T)$~\cite{hm99}. Moreover, combining
\eqref{ie} with \eqref{bb*} proves that $ C(S^5_H)_u$
is isomorphic as a left {$ C(S^5_H)^{U(1)}$}-module with 
$ C(S^5_H)^{U(1)}bb^*$.
This allows us to prove our main result:
\begin{theorem}\label{Sc}
The section module $ C(S^5_H)_u$ of the dual tautological line bundle 
over $\B CP^2_\T$ is \emph{not stably free} as a left 
$ C(\B CP^2_\T)$-module.
\end{theorem}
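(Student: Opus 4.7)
The plan is to reduce the claim to a $K$-theoretic non-triviality statement and then derive it by comparing the Chern--Galois idempotent with one produced by the Milnor formula~\eqref{o2e}. The strong connection constructed above has $\ell(u)=b^*\otimes b$, so by Chern--Galois theory (formula~\eqref{ie}) we obtain $C(S^5_H)_u\cong C(\mathbb{C}P^2_\T)\cdot bb^*$ as left $C(\mathbb{C}P^2_\T)$-modules. A finitely generated projective module over a unital ring is stably free if and only if the class of any representing idempotent is an integer multiple of $[1]$ in $K_0$. Hence it suffices to prove that $[bb^*]-[1]\neq 0$ in $K_0(C(\mathbb{C}P^2_\T))$.

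Next, I would collapse the triple-pullback presentation \eqref{thefamily} into a one-surjective two-term pullback $C(\mathbb{C}P^2_\T)\cong B_0\times_{B_{01}}B_1$, taking one of the three $\T\otimes\T$ patches as $B_1$ and the pullback of the remaining two as $B_0$. The cocycle condition of Section~\ref{cocycle} guarantees that the resulting boundary map is surjective, so the Mayer--Vietoris sequence~\eqref{mv} applies. The element $b$ of the previous subsection is manifestly visible in this presentation, and on the two overlaps where the middle tensor factor becomes $C(S^1)$ it restricts to a unitary while remaining a proper isometry on the third patch; this is exactly the pattern that Milnor's prescription converts into a non-trivial $K_0$-class.

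The heart of the argument is then to produce a unitary $U\in B_{01}$ together with lifts $c,d$ satisfying $(\id\otimes\pi_0)(c)=U^{-1}$ and $(\id\otimes\pi_0)(d)=U$, such that the associated Milnor idempotent $p_U$ from~\eqref{milnorid} is Murray--von~Neumann equivalent (after stabilisation) to $bb^*$. The natural candidate for $U$ is built from the image of $b$ in the overlap algebra: on that algebra $b$ becomes unitary, and the matrix expressions in~\eqref{milnorid} interpolate between the trivial projection that arises on $B_1$ and the genuinely non-trivial projection $1\otimes ss^*\otimes 1$ that arises on $B_0$. A direct computation using $s^*s=1$ in $\T$ should then yield $[p_U]-[I_n]=[bb^*]-[1]$.

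Finally, exactness of~\eqref{mv} reduces the theorem to showing that $[U]\in K_1(B_{01})$ is not in the image of $K_1(B_0)\oplus K_1(B_1)\to K_1(B_{01})$. This I would check by computing the relevant $K$-groups via iterated Mayer--Vietoris together with the Künneth formula, in the same spirit as the $K$-theory computation for $C(S^5_H)$ in Section~\ref{kgroups}, and then tracking $[U]$ through the clutching data~\eqref{psis}. The step I expect to be most obstinate is precisely this last one: the overlap algebra $B_{01}$ itself only becomes accessible after further pullback decompositions, so identifying $[U]$ explicitly and distinguishing it from classes lifted from the two patches will require a careful layered bookkeeping of the Mayer--Vietoris maps combined with the cocycle data. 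Once $[U]$ is pinned down as a non-liftable generator, the stable non-freeness of $C(S^5_H)_u$ follows.
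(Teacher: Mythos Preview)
Your outline is sound in spirit---reduce to showing $[bb^*]\notin\mathbb{Z}[1]$ in $K_0$, then identify $[bb^*]-[1]$ with a Milnor boundary class---but the paper takes a substantially shorter path than the one you sketch, and in particular it sidesteps the step you correctly identify as ``most obstinate''.

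The two main differences are these. First, the paper does not leave $bb^*$ implicit: it pushes $bb^*$ through the gauge isomorphism~\eqref{s5iso} to obtain the explicit triple $(ss^*\otimes 1,\,1\otimes 1,\,1\otimes ss^*)\in C(\mathbb{C}P^2_\T)$. Second---and this is the key shortcut---rather than running Mayer--Vietoris for the full triple pullback $C(\mathbb{C}P^2_\T)=B_0\times_{B_{01}}B_1$ as you propose, the paper \emph{projects} via $\pi\colon C(\mathbb{C}P^2_\T)\to P_1$ to the pullback of just the first two patches, where the image of $bb^*$ becomes $(ss^*\otimes 1,\,1\otimes 1)$. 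The group $K_0(P_1)\cong\mathbb{Z}[1]\oplus\mathbb{Z}\,\partial_{10}([u\otimes 1])$ is already known from \cite{r-j12}, and a single Milnor computation with the lifts $c=s^*\otimes 1$, $d=s\otimes 1$ gives $\partial_{10}([u\otimes 1])=[(1-ss^*,0)]$, hence $\pi_*[C(S^5_H)_u]=[1]-\partial_{10}([u\otimes 1])$, which is manifestly not an integer multiple of~$[1]$.

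What this buys: by passing to $P_1$ the paper never needs to analyse your overlap algebra $B_{01}$ (which in the iterated diagram is the further pullback $P_{12}$), never computes its $K_1$, and never has to verify a non-liftability condition. The functoriality of $K_0$ under $\pi_*$ does all the work: if $[C(S^5_H)_u]$ were in $\mathbb{Z}[1]$ upstairs, its image would be in $\mathbb{Z}[1]$ downstairs, and that is ruled out by one explicit line. Your route would eventually succeed, but it replaces this single push-forward by a layered Mayer--Vietoris/K\"unneth computation that the paper deliberately avoids.
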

\begin{proof}
The gauge isomorphism \eqref{s5iso} turns the projection $bb^*$ 
representing the finitely generated projective module
$ C(S^5_H)_u$  
to $(ss^*\otimes 1\;,\;1\otimes 1\;,\;1\otimes ss^*)\in C(\B CP^2_\T)$.
 Plugging it into the iterated pullback diagram
$$
\xymatrix{
&&& C(\mathbb C P^2_\T)\ar[dll]^\pi\ar[drr]&&\\
&P_1\ar[dr]\ar[drrr]\ar[dl]&&&&\mathcal{T}^{\otimes2}\ar[dl]\\
\mathcal{T}^{\otimes2}\ar[dr]_{\sigma_1}&&\mathcal{T}^{\otimes2}\ar[dl]^{{\Psi_{01}}\circ\sigma_1}&&P_{12}\ar[dl]\ar[dr]&\\
& C(S^1)\otimes\T &&\T\otimes C(S^1)\ar[dr]_{}&&\T\otimes C(S^1)\ar[dl]^{}\\
&&&& C(S^1)\otimes C(S^1)&}
$$
and projecting 
 via $\pi$ to $P_1$, we obtain $(ss^*\otimes 1\;,\;1\otimes 1)$.

Furthermore, consider the Mayer-Vietoris six-term exact sequence of the pullback diagram defining $P_1$, and take  unitary 
$u\otimes 1$ whose class generates $K_1( C(S^1)\otimes\T)$. We know 
from the proof of \cite[Theorem~2.1]{r-j12} that 
$K_0(P_1)=\B Z\oplus \B Z$ with one $\B Z$ generated by $[1]$ and the other $\B Z$ generated by $\partial_{10}([u\otimes 1])$. 
To compute the Milnor idempotent $p_{u\otimes 1}$ (see~\eqref{milnorid}), take a
 lifting of $u^{-1}\otimes 1$ to be $c:=s^*\otimes 1$,
 and a lifting of $u\otimes 1$ to be $d:=s\otimes 1$. Suppressing $\otimes 1$, we obtain 
\[
\partial_{10}([u\otimes 1])=\left[\left(
\begin{array}{cc}
(s^*(2 - ss^*)s,1)&(s^*(2 - ss^*)(1 - ss^*),0)\\
((1 - ss^*)s,0)&((1 - ss^*)^2,0)
\end{array}\right)\right]-[(1,1)]
=[(1-ss^*,0)].
\]
Hence $[1]-\partial_{10}([u\otimes 1])=[(ss^*\otimes 1,1\otimes 1)]=\pi_*[ C(S^5_H)_u]$.

Finally, if $ C(S^5_H)_u$ were stably free, then $\pi_*[ C(S^5_H)_u]=n[1]$ for some $n\in \B N$. This would contradict the just derived equality, so that $ C(S^5_H)_u$ is not stably free. 
\end{proof}

\section{$K$-groups of the quantum sphere $S^5_H$}\label{kgroups}

We end this paper by showing that  the $K$-groups of 
$S^5_H$ agree with  their 
classical counterparts. Its $C^*$-algebra is the triple-pullback  $C^*$-algebra
(see  \ref{hs5}),
so that we can apply \cite[Corollary~1.5]{r-j12} to determine its $K$-theory.

\subsection{Cocycle condition}

The first step in applying \cite[Corollary 1.5]{r-j12} is verifying the cocycle condition (see Section~\ref{cocycle}). 
\begin{lemma}\label{s5co}
The family~\eqref{s5} defining the triple-pullback $C^*$-algebra $ C(S_H^5)$
satisfies the cocycle condition.
\end{lemma}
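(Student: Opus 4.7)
The plan is to check the two defining properties of the cocycle condition directly. The key observation is that every map in the diagram \eqref{s5} has the form $\mathrm{id}\otimes\cdots\otimes\sigma\otimes\cdots\otimes\mathrm{id}$, acting as the Toeplitz symbol map on exactly one tensor factor, so all the ideals, images, and quotients involved split cleanly across the tensor factors.

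First I would fix notation: label the three vertices in the top row of \eqref{s5} as $A_i$, where $A_i$ carries $C(S^1)$ in the $i$-th tensor slot ($i\in\{0,1,2\}$) and $\T$ in the other two slots; similarly label the three bottom vertices as $A_{ij}$, carrying $C(S^1)$ in slots $i$ and $j$ and $\T$ in the remaining slot. Under this labeling, the map $\pi^i_j\colon A_i\to A_{ij}$ is the Toeplitz symbol $\sigma$ applied to the $\T$-factor in slot $j$. Since $\T$ and $C(S^1)$ are nuclear, the kernel of $\pi^i_j$ is precisely the closed ideal of $A_i$ obtained by replacing the $\T$-factor at slot $j$ by the compact operators $\Cpct$.

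Next I would verify condition~(1) by a direct identification of both sides: for distinct $i,j,k\in\{0,1,2\}$, the ideal $\pi^i_j(\ker\pi^i_k)$ of $A_{ij}$ consists of tensors with $\Cpct$ in slot $k$ and $C(S^1)$ in slots $i,j$, and the symmetric expression $\pi^j_i(\ker\pi^j_k)$ produces the same ideal. For condition~(2), I would observe that each quotient $A^i_{jk}=A_i/(\ker\pi^i_j+\ker\pi^i_k)$ is canonically isomorphic to $C(S^1)\otimes C(S^1)\otimes C(S^1)$, with the canonical surjection $[\cdot]^i_{jk}$ given by applying $\sigma$ in both slots $j$ and $k$; an analogous identification works for $A_{ij}/\pi^i_j(\ker\pi^i_k)$. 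Under these canonical identifications, each $\pi^{ij}_k$ becomes the identity map on $C(S^1)^{\otimes 3}$, whence every $\phi^{ij}_k$ equals the identity and the cocycle identity $\phi^{ik}_j=\phi^{ij}_k\circ\phi^{jk}_i$ is automatic.

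The main obstacle here is purely notational: keeping track of which tensor slot carries which algebra at each step. There is no substantive difficulty, because the three symbol maps act independently on disjoint factors, so all kernels and images factorise through the tensor product structure and both cocycle conditions reduce to tautologies on $C(S^1)^{\otimes 3}$.
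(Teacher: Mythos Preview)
Your proposal is correct and follows essentially the same approach as the paper: both verify conditions (1) and (2) directly, exploiting that every $\pi^i_j$ acts as $\sigma$ on a single tensor slot so that all kernels and quotients factor through the tensor structure. The only stylistic difference is that the paper carries out one sample case of each condition via an explicit computation with a linear splitting $\omega$ of $\sigma$, whereas you phrase the argument uniformly by identifying every quotient with $C(S^1)^{\otimes 3}$ and observing that each $\phi^{ij}_k$ becomes the identity; both viewpoints encode the same triviality.
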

\begin{proof}
It is straightforward to check the first part of the cocycle condition. We do it only in one case as all
other cases are completely analogous. For $i=2 $, $j=1$ and $k=0$, we obtain:
\begin{align}
\pi^2_1(\ker\pi^2_0)&=\pi^1_2(\ker\pi^1_0)\Leftrightarrow\nonumber\\
\sigma_2(\ker\sigma_1)&=\sigma_3(\ker\sigma_1)\Leftrightarrow\nonumber\\
\sigma_2(\mathcal K\otimes \tplz\otimes  C(S^1))&=\sigma_3(\mathcal K\otimes \mathcal 
C(S^1)\otimes \tplz))\Leftrightarrow\nonumber\\
\mathcal K\otimes  C(S^1)\otimes C(S^1)&=\mathcal K\otimes \mathcal 
C(S^1)\otimes C(S^1).
\end{align}

For the second part we use the following notation
\begin{equation}\label{bracket}
[\cdot]^i_{jk}:B_i\rightarrow B_i\slash (\ker\pi^i_j+\ker\pi^i_k),\quad
[\cdot]^{ij}_k:B_{ij}\rightarrow B_{ij}\slash \pi^i_j(\ker\pi^i_k).
\end{equation}
 Again all cases are done in a similar way, so that we only  check  the case 
$i=0$, $j=1$, $k=2$, i.e.\ we show that
$\phi^{02}_1=\phi^{01}_2\circ\phi^{12}_0$.
For any  $r\otimes  t\otimes v\in\T\otimes\T\otimes C(S^1)$, the left hand side is:
\begin{align}
\phi^{02}_1\big([r\otimes  t\otimes v]_{01}^2\big)
&=
\big((\pi_1^{02})^{-1}\circ\pi_1^{20}\big)\big([r\otimes  t\otimes v]_{01}^2\big)
\nonumber\\ &=
\Big[\big((\pi_2^{0})^{-1}\circ\pi_0^{2}\big)(r\otimes  t\otimes v)\Big]^0_{21}
\nonumber\\ &=
\big[(\sigma_3^{-1}\circ\sigma_1)(r\otimes  t\otimes v)\big]^0_{21}
\nonumber\\ &=
\big[\sigma(r)\otimes  t\otimes \omega(v)\big]^0_{21},
\end{align}
where $\omega$ is a linear splitting of~$\sigma$. On the other hand, we obtain:
\begin{align}
(\phi^{01}_2\circ\phi^{12}_0)\big([r\otimes  t\otimes v]_{01}^2\big)
 &=
\phi^{01}_2\Big(\Big[\big((\pi_2^{1})^{-1}\circ\pi_1^{2}\big)(r\otimes  t\otimes v)\Big]_{20}^1\Big)
\nonumber\\ &=
\phi^{01}_2\big(\big[r\otimes \sigma(t)\otimes \omega(v)\big]_{02}^1\big)
\nonumber\\ &=
\big((\pi^{01}_2)^{-1}\circ\pi^{10}_2\big)\big(\big[r\otimes \sigma(t)\otimes \omega(v)\big]_{02}^1\big)
\nonumber\\ &=
\Big[\big((\pi^0_1)^{-1}\circ\pi^1_0\big)\big(r\otimes \sigma(t)\otimes \omega(v)\big)\Big]^0_{12}
\nonumber\\ &=
\big[\sigma(r)\otimes \omega(\sigma(t))\otimes \omega(v)\big]^0_{12}
\nonumber\\ &=
[\sigma(r)\otimes t\otimes \omega(v)]^0_{12}.
\end{align}
Hence the left and the right hand side agree because $[]^i_{jk}=[]^i_{kj}$ for any set of distinct indices.
\end{proof}

\subsection{$K$-groups}

We are now ready for:
\begin{theorem}
 The $K$-groups of the Heegaard quantum $5$-sphere are:
$$
K_0(\qsph)=\B Z=K_1(\qsph).
$$
\end{theorem}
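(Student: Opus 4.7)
The plan is to iterate the Mayer-Vietoris exact sequence, which is the content of \cite[Corollary~1.5]{r-j12} cited just above the statement. The cocycle condition established in Lemma~\ref{s5co} guarantees that all the $*$-homomorphisms obtained at each stage of the iteration remain surjective, so the six-term exact sequence \eqref{mv} is available throughout. Concretely, I would present $C(S^5_H)$ as the pullback of $B_0 := C(S^1)\otimes\T\otimes\T$ and $P$ along the appropriate maps into $B_{01}\oplus B_{02}$, where $P$ is itself the pullback of $B_1 := \T\otimes C(S^1)\otimes\T$ and $B_2 := \T\otimes\T\otimes C(S^1)$ over $B_{12} := \T\otimes C(S^1)\otimes C(S^1)$.

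The $K$-groups of every building block come from the K\"unneth formula applied to $K_\ast(\T) = (\mathbb{Z},0)$ and $K_\ast(C(S^1)) = (\mathbb{Z},\mathbb{Z})$: each $B_i$ has $K_0=K_1=\mathbb{Z}$; each $B_{ij}$ has $K_0=K_1=\mathbb{Z}^2$; and the double-intersection sum $B_{01}\oplus B_{02}$ has $K_0=K_1=\mathbb{Z}^4$, with every generator represented either by the class of the unit or by a gauge unitary $u$ placed in a specific tensor slot.

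The first Mayer-Vietoris computation, for $P$, is clean: the map $K_1(B_1)\oplus K_1(B_2)\to K_1(B_{12})$ sends the generators $[1\otimes u\otimes 1]$ and $[1\otimes 1\otimes u]$ to distinct generators of $\mathbb{Z}^2$, so it is an isomorphism; meanwhile $K_0(B_1)\oplus K_0(B_2)\to K_0(B_{12})$ has image $\mathbb{Z}\cdot[1]$ and kernel the diagonal. Exactness forces $K_0(P)=K_1(P)=\mathbb{Z}$. As a sanity check, the pullback $P$ visibly splits as $\T\otimes C(S^3_H)$, and K\"unneth recovers the same answer. The second Mayer-Vietoris sequence, for $C(S^5_H) = B_0\times_{B_{01}\oplus B_{02}} P$, proceeds in the same spirit: by functoriality of K\"unneth, the difference map $K_i(B_0)\oplus K_i(P)\to K_i(B_{01})\oplus K_i(B_{02})$ is determined on generators by the symbol maps alone, each acting by $[1]\mapsto[1]$ and $[u]\mapsto[u]$ in the appropriate slot. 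Computing ranks of images and kernels then yields $K_0(C(S^5_H)) = K_1(C(S^5_H)) = \mathbb{Z}$.

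The main obstacle I anticipate is the second-stage bookkeeping: one must correctly identify the generator of $K_1(P)$ in terms of its images in $K_1(B_{01})$ and $K_1(B_{02})$ (a concrete unitary representative in the pullback $P$, rather than an abstract K\"unneth class, is required), and similarly match the behaviour of the Bott-type $K_0$ generators of $B_{01}$ and $B_{02}$ that have no preimage in $K_0(B_0)\oplus K_0(P)$. These contribute nontrivially to the connecting homomorphism $\partial_{01}$, and one has to verify via the explicit formula \eqref{01} that the resulting extension in the six-term sequence splits cleanly and leaves no torsion, so the final answer is really $\mathbb{Z}$ in each degree rather than a larger free or mixed group.
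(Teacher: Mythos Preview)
Your first Mayer--Vietoris step (for $P$, which coincides with the paper's $P_1$) is fine, and the identification $P\cong\T\otimes C(S^3_H)$ is a nice shortcut. The problem is the second step. Neither leg of the pullback square
\[
\xymatrix{
C(S^5_H)\ar[r]\ar[d] & P\ar[d]\\
B_0\ar[r] & B_{01}\oplus B_{02}
}
\]
is surjective, so the six-term sequence \eqref{mv} is not available here. Concretely, if $(b_1,b_2)\in P$ then $\pi^1_2(b_1)=\pi^2_1(b_2)$ in $B_{12}$; applying the remaining symbol map to both sides shows that $\pi^1_0(b_1)$ and $\pi^2_0(b_2)$ have the same image in $B_{012}=C(S^1)^{\otimes 3}$. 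Hence the range of $P\to B_{01}\oplus B_{02}$ is contained in the proper subalgebra
\[
P_2:=B_{01}\underset{B_{012}}{\times} B_{02}\subsetneq B_{01}\oplus B_{02},
\]
and the same constraint obstructs surjectivity of $B_0\to B_{01}\oplus B_{02}$. Your claimed $K_\ast(B_{01}\oplus B_{02})=\B Z^4$ is therefore not the right middle term, and the rank bookkeeping you outline cannot be carried out as stated.

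The fix is exactly what \cite[Corollary~1.5]{r-j12} encodes and what the paper does: replace $B_{01}\oplus B_{02}$ by $P_2$. This forces a \emph{third} six-term sequence, for $P_2$ as a pullback of $B_{01}$ and $B_{02}$ over $C(S^1)^{\otimes 3}$, before the final one for $C(S^5_H)=P\times_{P_2}B_0$. The cocycle condition of Lemma~\ref{s5co} is precisely what guarantees that $P\to P_2$ is surjective, making Mayer--Vietoris applicable at the last stage. The $P_2$ computation is where the genuine work lies: $K_\ast(C(S^1)^{\otimes 3})=\B Z^4$ in each degree, and one must track the K\"unneth generators (including the Bott-type classes $\beta_{ij}$ and the triple-product class $u_{123}$) through the difference maps to see that $K_0(P_2)=K_1(P_2)=\B Z^2$. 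Only then does the final sequence collapse to give $K_\ast(C(S^5_H))=\B Z$.
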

\pr
Lemma~\ref{s5co} allows us to apply \cite[Corollary 1.5]{r-j12} to the family of surjections in  diagram~\eqref{s5}.
The first six-term exact sequence is
\begin{equation}\label{i}
\xymatrix{
K_0(P_1)\ar[r]&K_0(\tplz^{\otimes 2}\otimes C(S^1)) \oplus K_0(\tplz\otimes C(S^1)\otimes\tplz)\ar@{.>}[r]
&K_0(\tplz\otimes C(S^1)^{\otimes 2})\ar[d]^{\partial_{01}}\\
K_1(\tplz\otimes C(S^1)^{\otimes 2})\ar[u]
&K_1(\tplz^{\otimes 2}\otimes C(S^1)) \oplus K_1(\tplz\otimes C(S^1)\otimes\tplz)\ar@{.>}[l]&K_1(P_1).\ar[l]}
\end{equation}
Here the dotted arrows are $(\id\otimes\sigma\otimes\id)_*-(\id\otimes\id\otimes\sigma)_* $.
With the help of  the K\"unneth formula the exact sequence becomes:
\begin{equation}
\xymatrix@C+50pt{
K_0(P_1)\ar[r]&\B Z \oplus\B Z \ar@{.>}[r]^{(m,n)\mapsto(m-n,0)}&\B Z \oplus\B Z \ar[d]\\
\B Z \oplus\B Z \ar[u]&\B Z \oplus\B Z \ar@{.>}[l]_{(m,n)\mapsto(m,-n)}&K_1( P_1).\ar[l]}
\end{equation}
Hence
$
K_0(P_1)=\B Z=K_1(P_1)
$.

The second diagram of \cite[Corollary 1.5]{r-j12} is
\begin{equation}\label{ii}
\xymatrix{
K_0(P_2)\ar[r]&K_0( C(S^1)\otimes\tplz\otimes  C(S^1)) \oplus K_0( C(S^1)^{\otimes 2}\otimes\tplz)\ar@{.>}[r]
&K_0( C(S^1)^{\otimes 3})\ar[d]^{\partial_{01}}\\
K_1( C(S^1)^{\otimes 3})\ar[u]^{\partial_{10}}
&K_1( C(S^1)\otimes\tplz\otimes  C(S^1)) \oplus K_1( C(S^1)^{\otimes 2}\otimes\tplz)\ar@{.>}[l]&K_1(P_2).\ar[l]}
\end{equation}
In order to unravel this diagram, we need to take a closer look into the K\"unneth formula.
We consider $[u]\otimes[u]\in K_1( C(S^1))\otimes K_1( C(S^1))$ and
denote its image under
the K\"unneth isomorphism
$K_1( C(S^1))\otimes K_1( C(S^1))\to 
K_0( C(S^1)\otimes C(S^1))$ by~$\beta$.
Using the natural leg numbering convention, we extend this notation to triple tensor products
with $[1]\in K_0(\tplz)$ or with $[1]\in K_0( C(S^1))$ as an appropriate factor. 
Next, we denote by $u_i$ the $K_1$-class of a triple tensor with $u$ as the $i$-th factor
and $1\in\tplz$ or $1\in C(S^1)$
 as any remaining factor. Hence $K_1( C(S^1)^{\otimes 3})$ is $\B Z^4$ generated by
$u_1$, $u_2$, $u_3$ and the fourth generator denoted by $u_{123}$. Furthermore, 
 the above exact sequence becomes
\begin{equation*}
\xymatrix@C+10pt{
K_0(P_2)\ar[r]&
\B Z [1]\oplus\B Z \beta_{13}\oplus\B Z [1]\oplus\B Z \beta_{12}
\ar@{.>}[r]&
\B Z [1]\oplus\B Z \beta_{12}\oplus\B Z \beta_{13}\oplus\B Z \beta_{23}
\ar[d]\\
\B Z u_1\oplus\B Z u_2\oplus\B Z u_3\oplus\B Z u_{123}
\ar[u]&
\B Z u_1\oplus\B Z u_3\oplus\B Z u_1\oplus\B Z u_2
\ar@{.>}[l]&K_1( P_2).\ar[l]}
\end{equation*}
Now, by the functoriality of the  K\"unneth isomorphism
\cite[p.~232]{b-b98} and with the help of the diagram~\eqref{iterd}, 
it is straightforward to verify that the upper and the lower
dotted maps are respectively given by
\[
{(a,b,c,d)\longmapsto(a-c,-d,b,0)}\quad\text{and}\quad{(a,b,c,d)\longmapsto(a-c,-d,b,0)}.
\]
Hence, by a straightforward homological computation, we infer that
\[
K_0(P_2)=\B Z [1]\oplus \B Z\,\partial_{10}(u_{123})\quad\text{and}\quad 
K_1(P_2)=\B Z[(\mathrm{u}_1,\mathrm{u}_1)]\oplus \B Z\,\partial_{01}(\beta_{23}),
\]
where $\mathrm{u}_1:=u\otimes 1\otimes 1$.

Finally, the last diagram of \cite[Corollary 1.5]{r-j12} is
\begin{equation}\label{iii}
\xymatrix{
K_0(\qsph)\ar[r]&K_0(P_1) \oplus K_0( C(S^1)\otimes\tplz^{\otimes 2})\ar@{.>}[r]&K_0(P_2)\ar[d]\\
K_1(P_2)\ar[u]&K_1(P_1) \oplus K_1( C(S^1)\otimes\tplz^{\otimes 2})\ar@{.>}[l]&K_1(\qsph).\ar[l]}
\end{equation}
Plugging in generators into this diagram, we obtain
\begin{equation}
\xymatrix@C+20pt{
K_0(\qsph)\ar[r]&\B Z[1] \oplus\B Z[1] \ar@{.>}[r]&
\B Z[1] \oplus\B Z\,\partial_{10}(u_{123})\ar[d]\\
\B Z[(\mathrm{u}_1,\mathrm{u}_1)]\oplus \B Z\,\partial_{01}(\beta_{23})
\ar[u]&\B Z\,\partial_{01}(\beta_{23}) \oplus\B Zu_1\ar@{.>}[l]&K_1(\qsph).\ar[l]}
\end{equation}
Here the upper dotted arrow is evidently given by the formula
${(a,b)\mapsto(a-b,0)}$. It is a bit more complicated to determine the lower dotted arrow.
To this end, we denote by $\mathrm{b}\in M_2\big( C(S^1)\otimes  C(S^1)\big)$
the pullback of the Bott projection on $S^2$, so that $[\mathrm{b}]=\beta$. Next, by 
$\bar{\mathrm{b}}\in M_2\big(\tplz\otimes  C(S^1)\big)$ we denote a self-adjoint lifting 
of~$\mathrm{b}$ along $\id_{M_2(\B C)}\otimes(\sigma\otimes\id)$.
Then we substitute $\bar{\mathrm{b}}$ to the formula \eqref{01} to compute both   
$\partial_{01}(\beta_{23})\in K_1(P_1)$ and $\partial_{01}(\beta_{23})\in K_1(P_2)$ at the
same time. The resulting formulas will only differ in the leftmost tensor factor: for $P_1$ it will
be $1\in\tplz$ and for $P_2$ it will be $1\in C(S^1)$. 
Therefore 
\[
 (\sigma_1,\sigma_1)_*\colon K_1(P_1)\ni\partial_{01}(\beta_{23})\stackrel{}{\longmapsto} \partial_{01}(\beta_{23})\in K_1(P_2).
\]
Combining this observation with
the diagram 
\begin{equation}\label{iterd}
\xymatrix@-10pt@C=-10pt{
&&&{\widetilde P}\ar[dll]\ar[drr]&&\\
&\widetilde P_1\ar[dr]\ar[drrr]^{\widetilde\gamma}\ar[dl]&&&&B^\pi/I_2\ar[dl]_{\widetilde\delta}\\
B^\pi/I_0\ar[dr]&&B^\pi/I_1\ar[dl]&&\widetilde P_2\ar[dl]\ar[dr]&\\
&B^\pi/(I_0+I_1)&&B^\pi/(I_0+I_2)\ar[dr]&&B^\pi/(I_1+I_2),\ar[dl]\\
&&&&B^\pi/(I_0+I_1+I_2)&}
\end{equation} 
one easily checks that the desired lower dotted map is given by the
formula ${(a,b)\mapsto(-b,a)}$. Consequently,
$
K_0(\qsph)=\B Z= K_0(\qsph)
$
as claimed.
\sq

\section*{Acknowledgements}
\noindent
The authors are extremely grateful to Paul Baum and Nigel Higson for pivotal $K$-consultations. The writing up of this paper was
partially supported by 
NCN grant 
2012/06/M/ST1/00169.

\end{document}